\newlist{case}{enumerate}{1}
\setlist[case]{label=Case \arabic*:}
\author{{\bf Maya Mohsin Ahmed }}
\title{{\LARGE {\bf A Platonic basis of Integers }}}
\date{}
\newtheorem{prop}{Proposition}[section]
\newtheorem{exm}{Example}[section]
\newtheorem{conjecture}{Conjecture}[section]
\begin{document}     
\maketitle           

\begin{abstract}

In this article, we prove  that every integer can be written
as an integer combination  of exactly 4 tetrahedral numbers.
Moreover,  we compute  the modular  periodicity  of platonic
numbers.

\end{abstract}

\section{Introduction}\label{intro}
Figurate  numbers related  to the  five platonic  solids are
called  {\em  platonic   numbers}.  In  \cite{pollock},  Sir
Frederick Pollock conjectured that every positive integer is
the sum of  at most five, seven, nine,  thirteen, and twenty
one   tetrahedral   numbers,   octahedral  numbers,   cubes,
icosahedral numbers, and dodecahedral numbers, respectively.
See \cite{cg}  for a detailed study of  Platonic numbers. In
this  article, we discuss  integer combinations  of platonic
numbers instead of sums.  Thus we also include the operation
of subtraction.

Let   $t_n,  o_n,  c_n,   i_n$,  and  $d_n$
represent  the   tetrahedral  numbers,  octahedral  numbers,
cubes,  icosahedral numbers,  and the  dodecahedral numbers,
respectively.  The  following  identities are  discussed  in
\cite{cg} and \cite{kim}:
\begin{eqnarray} \label{formulas}
t_n =  \frac{n(n+1)(n+2)}{6} \\ o_n  = \frac{n(2n^2+1)}{3}
\nonumber   \\    c_n   =   n^3   \nonumber    \\   i_n   =
\frac{n(5n^2-5n+2)}{2}      \nonumber     \\      d_n     =
\frac{n(9n^2-9n+2)}{2} \nonumber
 \end{eqnarray}
We list a few numbers in the sequences:
\[ \begin{array}{llllllllllllllll}
t_n: & 1,4,10, 20, 35, 56, 84, 120, 165, 220, \dots \\

o_n: & 1, 6, 19, 44, 85,  146, 231, 344, 489, 670, \dots \\

c_n: & 1, 8, 27, 64, 125, 216, 343, 512, 729, 1000, \dots \\

i_n: & 1, 12, 48, 124, 255, 456, 742, 1128, 1629, 2260, \dots \\

d_n: & 1, 20, 84, 220, 455, 816, 1330, 2024, 2925, 4060, \dots
\end{array} \]

 In Section  \ref{differences}, we prove  that every integer
 can  be written  as  an integer  combination  of exactly  4
 tetrahedral numbers. Moreover,  we prove that integers that
 are divisible by four, six, forty five, and fifty four, can
 be  written   as  integer  combinations   of  exactly  four
 octahedral   numbers,  cubes,   icosahedral   numbers,  and
 dodecahedral numbers, respectively. We also conjecture that
 every  integer can  be written  as a  sum of  at  most five
 different platonic numbers. 

We say a  sequence $s_n$ is {\em periodic}  mod $d$ if there
exists an  integer $m$ such  that, $s_{n+m} \equiv  s_n$ mod
$d$ for every  $n$. The smallest such integer  $m$ is called
the {\em period} of $s_n$ mod $d$.

\begin{exm}{\em
Let $r_n$ denote the residues of the the tetrahedral numbers $t_n$  modulo $2$:

\[\begin{array}{cccccccccccccccccccccc}
t_n: & 1&4&10&20& 35& 56& 84& 120& 165& 220 & 286 & 364 & 455 & 560 & \dots \\
r_n: & 1&0&0& 0&   1& 0&  0&   0&   1&  0&  0&   0&   1& 0\dots
\end{array} \]
Observe that  the reduced  sequence $r_n$  repeats after
every  four terms.  Consequently, $t_{n+4}  \equiv  t_n$ mod
$2$. Hence $t_n$ has period $4$ mod $2$. }\end{exm}

In  Section  \ref{differences}, we  show  that the  platonic
numbers satisfy the following linear recurrence relation:
\begin{eqnarray} \label{lineareqns}
y_n= 4y_{n-1}-6y_{n-2}+4y_{n-3}-y_{n-4}.
\end{eqnarray}

It  is  well  known  that linear  recurrence  sequences  are
eventually periodic. See  \cite{engstorm} and the references
therein for  a discussion  of periodic sequences  defined by
linear recurrences. In  Section \ref{details}, we derive the
periods of the platonic numbers.

\section{Forward Differences.}\label{differences}

 In this section, we  use forward differences of tetrahedral
 numbers to  prove that every  integer can be written  as an
 integer combination  of exactly 4  tetrahedral numbers. Let
 $y_n$ denote  a sequence of  integers, then the  {\em first
   forward difference}, denoted  by $\Delta y_n$, is defined
 as
\[
\Delta y_n = y_{n+1} - y_n.
\]

$\Delta$  is called the  {\em forward  difference operator}.
The differences of the  first forward differences are called
the  {\em second  forward  differences} and  are denoted  by
$\Delta^2  y_i$. Continuing  thus, the  {\em  $n$-th forward
  difference} is defined as
\[
\Delta^n y_n =  \Delta^{n-1} y_{n+1} - \Delta^{n-1} y_n. 
\]

We provide the forward differences tables of the five platonic numbers below.

\[\begin{array}{llllllllllllll}

\begin{array}{cccccccccccccccccccccccc}
t_n & \Delta t_n  & \Delta^2t_n & \Delta^3 t_n &  \Delta^4 t_n  \\ \hline 
1 & 3 & 3 & 1 & 0\\ 4 & 6 & 4 & 1 & 0  \\ 10 & 10 & 5 & 1 & 0  \\ 20 & 15 & 6 & 1 & 0\\ 35 & 21 & 7 & 1 & 0\\ 56 & 28 & 8 & 1 & 0 \\ 84 & 36 & 9 & 1  & \vdots  \\ 
120 & 45 & 10 & \vdots  & \vdots \\ 165 & \vdots  & \vdots & \vdots  & \vdots \\  \vdots  & \vdots & \vdots  & \vdots & \vdots 
\end{array} 

& &

\begin{array}{cccccccccccccccccccccccc}
o_n & \Delta o_n  & \Delta^2o_n & \Delta^3 o_n &  \Delta^4 o_n  \\ \hline
1  & 5 & 8 & 4 & 0 \\
6  & 13 & 12 & 4 & 0 \\
19 & 25 & 16 & 4 & 0\\
44 & 41 & 20 & 4 & 0\\
85 & 61 & 24 & 4 & 0 \\
146 & 85 & 28 & 4 & 0 \\
231 & 113 & 32 & 4 & \vdots \\
344 & 145 & 36 & \vdots & \vdots \\
489 & 181 & \vdots & \vdots & \vdots \\
670 & \vdots  & \vdots & \vdots  & \vdots \\   \vdots  & \vdots & \vdots  & \vdots & \vdots 
\end{array} 

& & 
\begin{array}{cccccccccccccccccccccccc}
c_n & \Delta c_n  & \Delta^2c_n & \Delta^3 c_n &  \Delta^4 c_n  \\ \hline
1  & 7 & 12 & 6 & 0 \\
8  & 19 & 18 & 6 & 0 \\
27 & 37 & 24 & 6 & 0 \\
64 & 61 & 30 & 6 & 0 \\
125 & 91 & 36 & 6 & 0  \\
216 & 127 & 42 & 6 & 0  \\
343 & 169 & 48 & 6 & 0 \\
512 & 217 & 52 & 6 & \vdots \\
729  & 271 & 58 & \vdots & \vdots  \\
1000 &  \vdots  & \vdots & \vdots  & \vdots \\   \vdots  & \vdots & \vdots  & \vdots & \vdots 
\end{array} 
\\

\end{array}
\]

\[\begin{array}{llllllllllllll}

\begin{array}{cccccccccccccccccccccccc}
i_n & \Delta i_n  & \Delta^2i_n & \Delta^3 i_n &  \Delta^4 i_n  \\ \hline 
1 & 11 & 25 & 15 & 0 \\
12 & 36 & 40 & 15 & 0 \\
48 & 76 & 55 & 15 & 0 \\
124 & 131 & 70 & 15 & 0 \\
255 & 201 & 85 & 15 & 0 \\
456 & 286 & 100 & 15 & 0 \\
742 & 386 & 115 & 15 & \vdots \\
1128 & 501 & 130 & \vdots & \vdots   \\
1629 & 631 & \vdots &  \vdots & \vdots  \\
2260 &  \vdots  & \vdots & \vdots  & \vdots \\   \vdots  & \vdots & \vdots  & \vdots & \vdots 
\end{array} 

& & 

\begin{array}{cccccccccccccccccccccccc}
d_n & \Delta d_n  & \Delta^2d_n & \Delta^3 d_n &  \Delta^4 d_n  \\ \hline 
1 & 19 & 45 & 27 & 0 \\
20 & 64 & 72 & 27 & 0 \\
84 & 136 & 99 & 27 & 0 \\
220 & 235 & 126 & 27 & 0 \\
455 & 361 & 153 & 27 & 0 \\
816 & 514 & 180 & 27 & 0  \\
1330 & 694 & 207 & 27 & \vdots \\
2024 & 901 & 234 & \vdots & \vdots  \\
2925 & 1135    &  \vdots  & \vdots & \vdots     \\
4060 &  \vdots  & \vdots & \vdots  & \vdots \\   \vdots  & \vdots & \vdots  & \vdots & \vdots 
\end{array} 
\end{array}
\]

We see the fourth forward differences of all the platonic numbers are
zero.  The following identities are derived from the first forward
differences of the Platonic numbers.

\begin{eqnarray} \label{firsteqns}
\Delta t_n = t_{n+1}- t_n &=&  \frac{1}{2}n^2+\frac{3}{2}n+1.  \\ \nonumber
\Delta o_n = o_{n+1}- o_n &=& 2n^2+2n+1.  \\ \nonumber
\Delta c_n = c_{n+1}- c_n &=& 3n^2+3n+1. \\ \nonumber
\Delta i_n = i_{n+1}- i_n &=& \frac{15}{2}n^2+ \frac{5}{2}n+1. \\ \nonumber
\Delta d_n = d_{n+1}- d_n &=& \frac{27}{2}n^2+\frac{9}{2}n+1. 
\end{eqnarray}

Since, $\Delta^2 y_n = y_{n+2}-2y_{n+1}+y_n$, we get the following
identities.

\begin{eqnarray} \label{secondeqns}
t_n - 2t_{n+1} + t_{n+2} &=& n+2. \\ \nonumber
o_n - 2o_{n+1} + o_{n+2} &=& 4n +4. \\ \nonumber
c_n - 2c_{n+1} + c_{n+2} &=& 6n+6. \\ \nonumber
i_n - 2i_{n+1} + i_{n+2} &=& 15n+10.  \\ \nonumber
d_n - 2d_{n+1} + d_{n+2} &=&  27n + 18.
\end{eqnarray}

Since $\Delta^3 y_n = y_{n+3}-3y_{n+2}+3y_{n+1}-y_n$, we get the
following identities.
\begin{eqnarray} \label{thirdeqns}
t_{n+3} - 3t_{n+2} + 3t_{n+1} - t_n &=& 1. \\ \nonumber
o_{n+3} - 3o_{n+2} + 3o_{n+1} - o_n &=& 4. \\ \nonumber
c_{n+3} -3c_{n+2} + 3c_{n+1} -c_n &=& 6. \\ \nonumber
i_{n+3} -3i_{n+2}+3i_{n+1}-i_n &=& 15. \\ \nonumber
d_{n+3} -3d_{n+2}+3d_{n+1}-d_n&=&27. 
\end{eqnarray}

The fourth forward difference is given by $\Delta^4
y_n=y_{n+4}-4y_{n+3}+6y_{n+2}-4y_{n+1}+y_n$. Consequently, we get the
following identities.
\begin{eqnarray} \label{fourtheqns}
t_{n+4}-4t_{n+3}+6t_{n+2}-4t_{n+1}+t_n &=& 0 \\ \nonumber 
o_{n+4}-4o_{n+3}+6o_{n+2}-4o_{n+1}+o_n &=& 0 \\ \nonumber 
c_{n+4}-4c_{n+3}+6c_{n+2}-4c_{n+1}+c_n &=& 0 \\ \nonumber 
i_{n+4}-4i_{n+3}+6i_{n+2}-4i_{n+1}+i_n &=& 0 \\ \nonumber 
d_{n+4}-4d_{n+3}+6d_{n+2}-4d_{n+1}+d_n &=& 0 
\end{eqnarray}

Consequently, it follows that all the  platonic numbers satisfy
the linear recurrence relation given by Equation \ref{lineareqns}.

From Equations \ref{secondeqns} and \ref{thirdeqns}, we get
$\Delta^2 t_n - 2 \Delta^3 t_n = n$. Consequently, 
\begin{equation} \label{bingoeqn}
3t_n-8t_{n+1}+7t_{n+2}-2t_{n+3} = n.
\end{equation}

Equation \ref{bingoeqn} implies that every integer can be written as
an integer combination of four tetrahedral numbers.

Again, from Equations \ref{secondeqns} and \ref{thirdeqns}, we
get $\Delta^2 o_n - \Delta^3 o_n = 4n$. Therefore,
\[
2 o_n - 5 o_{n + 1} + 4 o_{n + 2} - o_{n + 3} = 4n.
\]

Therefore, we conclude that integers divisible by $4$ can be written
as integer combinations of four octahedral numbers.

Similarly, since $\Delta^2 c_n - \Delta^3 c_n = 6n$, 
we get
\[
2 c_n - 5 c_{n + 1} + 4 c_{n + 2} - c_{n + 3} = 6n,
\]
which implies that the integers which are divisible by $6$ are integer
combinations of four cubes.

From Equations \ref{secondeqns} and \ref{thirdeqns}, we get  $3\Delta^2
i_n - 2\Delta^3 i_n = 45n$, therefore
\[
5i_n-12i_{n+1}+9i_{n+2}-2i_{n+3}=45n.
\]

Hence  integers that are divisible by $45$ can be written
as integer combinations of four icosahedral numbers. 

Finally, from Equations \ref{secondeqns} and \ref{thirdeqns}, we get
$3\Delta^2 d_n - 2\Delta^3 d_n = 54n$, therefore

\[
5d_n-12d_{n+1}+9d_{n+2}-2d_{n+3}=54n.
\]

Hence we conclude that integers that are divisible by $54$ are integer
combinations of four dodecahedral numbers.

Note that since the second forward differences are linear in
$n$, some  combination of these differences might  lead to a
proof  of the  Pollock's conjecture.  We leave  that  to the
reader to explore. In our study of the Pollock's conjecture,
we  observed that  we could  write the  numbers  between two
platonic numbers as sums of at most five Platonic numbers.

\begin{exm}{\em Integers between $o_5=85$ to $t_8=120$ written as sums of at most five different Platonic numbers.
\begin{adjustwidth}{-.85in}{.5in}
\[\begin{array}{lllllll}
86 = 85+1, & 87 = 85+1+1, & 88 = 85+1+1+1, & 89 = 85+4, \\ 90=85+4+1,
& 91 = 85+4+1+1, & 92 = 85+4+1+1+1, & 93 = 85+8, \\ 94=85+8+1,&
95=85+10, & 96=85+10+1, & 97=85+12, \\ 98=85+12+1, & 99 = 85+12+1+1, &
100=85+12+1+1+1, & 101=85+12+4,\\ 102=85+12+4+1, & 103=85+12+4+1+1, &
104=85+10+8+1, &105 = 85+20, \\ 106=85+20+1, & 107=85+20+1+1, &
108=85+20+1+1+1, & 109=85+20+4, \\ 110=85+20+4+1, & 111=85+20+4+1+1, &
112=85+27, &113=85+27+1, \\ 114=85+27+1+1, & 115=85+27+1+1+1, &
116=85+27+4, &117=85+27+4+1, \\ 118=85+27+4+1+1, & 119=85+20+12+1+1

\end{array} \]
\end{adjustwidth}
} \end{exm}

Thus, we add one more conjecture to the collection of the many beautiful
unproved conjectures on platonic numbers.

\begin{conjecture}
Every integer can be written as a sum of at most five different
Platonic numbers.

\end{conjecture}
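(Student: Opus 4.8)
The plan is to treat the conjecture as a Waring-type additive problem and attack it with the Hardy--Littlewood circle method, using the periodicity results of Section~\ref{details} to control the local factors. Throughout, adopt the convention $t_0=o_0=c_0=i_0=d_0=0$ (consistent with Equation~\ref{formulas}), so that ``a sum of at most five different platonic numbers, one of each type'' means a representation
\[
N = t_a + o_b + c_c + i_d + d_e, \qquad a,b,c,d,e \ge 0;
\]
the repeated $1$'s appearing in the example above are then read as $t_1,o_1,c_1,\dots$ drawn from distinct sequences. First I would reduce the conjecture to an asymptotic statement: it suffices to prove that every sufficiently large $N$ is representable, since the finitely many cases $N\le N_0$ can be settled by machine (the example already clears one such range). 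For the asymptotic part, set $P=(6N)^{1/3}$, let each index range up to where the corresponding figurate number first exceeds $N$, and for each sequence $s\in\{t,o,c,i,d\}$ form $f_s(\alpha)=\sum_a \exp(2\pi i\alpha\, s_a)$. The representation count is $R(N)=\int_0^1 f_t(\alpha)f_o(\alpha)f_c(\alpha)f_i(\alpha)f_d(\alpha)\exp(-2\pi i\alpha N)\,d\alpha$, and the goal is to show $R(N)>0$.

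The next step is the usual major/minor arc dissection. On the major arcs (neighbourhoods of rationals $a/q$ with $q$ small) each cubic sum is approximated by a complete exponential sum times an oscillatory integral, and assembling the five factors produces a main term of shape $\mathcal{S}(N)\,\mathcal{J}(N)$. A dimension count shows the singular integral $\mathcal{J}(N)$ is positive of order $N^{2/3}$ (five variables of size $N^{1/3}$ constrained by one equation), so the crux of the major-arc analysis is to bound the singular series $\mathcal{S}(N)=\prod_p\sigma_p(N)$ away from $0$. This is precisely where Section~\ref{details} feeds in: the residues attained by $t_n,o_n,c_n,i_n,d_n$ modulo each prime power are eventually periodic with the periods we compute, so one can enumerate the classes each sequence covers and check that their fivefold sumset meets every target residue with positive density. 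In fact I expect the periodicity tables to show that the tetrahedral numbers \emph{alone} already run through every residue class modulo every prime power (they do so mod $2$ and, e.g., mod $9$), which would make each $\sigma_p(N)$ positive automatically; a Hensel-type lifting at a nonsingular solution then gives $\sigma_p(N)\ge 1$ and, for large $p$, $\sigma_p(N)=1+O(p^{-1})$, so $\mathcal{S}(N)\gg 1$. A genuine obstruction at some prime would instead \emph{disprove} the conjecture, so this finite check is also the decisive sanity test on its truth.

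The hard part will be the minor arcs: one must prove $\int_{\mathcal{M}}|f_t f_o f_c f_i f_d|\,d\alpha=o(N^{2/3})$. With only five cubic variables this is exactly the regime in which Waring's problem for cubes is open, and I do not expect a routine resolution. Weyl's inequality saves only a factor $P^{1/4}$ off the trivial bound for each cubic sum, and even combined with Hua-type mean values the pooled saving from five factors falls well short of the $N^{2/3}$ main term. My proposed route is to exploit that our five polynomials have \emph{distinct} leading coefficients $\tfrac16,\tfrac23,1,\tfrac52,\tfrac92$ and distinct shapes: rather than bound each $f_s$ in isolation, feed the product into a Vinogradov-type mean value and try to extract cancellation from the coupled system via modern efficient-congruencing (Wooley's nested efficient congruencing) technology. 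Whether this mixed structure supplies enough extra leverage to close the gap with exactly five terms is, in my assessment, the genuine obstacle. Failing a complete minor-arc bound, the realistic fallback is to establish the conjecture for all sufficiently large $N$ conditionally on the best available cubic mean-value estimates and to combine this with an extended computer verification in the low range.
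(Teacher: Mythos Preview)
The paper does not prove this statement at all: it is presented explicitly as an open conjecture (``we add one more conjecture to the collection of the many beautiful unproved conjectures on platonic numbers''), supported only by the numerical example preceding it. There is therefore no ``paper's own proof'' to compare against.

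Your proposal is a reasonable strategic outline, and you are candid that it is not a proof. The decisive gap is exactly the one you flag: on the minor arcs you need to beat a main term of order $N^{2/3}$ using five generating functions each of length $\asymp N^{1/3}$ attached to cubic polynomials. Up to the lower-order terms, this is the same analytic difficulty as representing $N$ as a sum of five cubes, which is open; Weyl/Hua estimates and even the full strength of efficient congruencing currently deliver sums of seven cubes unconditionally, not five. The distinct leading coefficients $\tfrac16,\tfrac23,1,\tfrac52,\tfrac92$ do not obviously buy additional cancellation in the minor-arc integral, since after rational rescalings each $f_s$ behaves like a cubic Weyl sum and the known mean-value technology is insensitive to such affine changes. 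So the heart of your plan rests on a breakthrough that is not presently available, and your ``fallback'' (conditional result plus extended computation) would not establish the conjecture either.

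A smaller point: your local analysis leans on the hope that tetrahedral numbers alone hit every residue class modulo every prime power. The periods in Proposition~\ref{periodprop} tell you only how long the residue pattern is, not that it is surjective; you would still need to verify surjectivity prime by prime (or give a uniform argument), and at the primes $2$ and $3$ the denominators in $t_n$ make this the delicate case rather than the automatic one.
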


\section{Modular Periods of Platonic numbers.}\label{details}

In this  section, we derive the modular  periods of platonic
numbers.  We use  the formulas  (\ref{formulas})  in Section
\ref{intro} for the proof of the following proposition.

\begin{prop} \label{periodprop}
Let $d>1$ be an integer. 

\begin{enumerate}

\item Consider the sequence of tetrahedral numbers $t_n$.  Let $p^t$ denote the 
period of $t_n$ mod $d$.
\begin{case}

\item $d$ is an even integer. 
\[p^t = \left \{\begin{array}{llllllllllll}
 6d & \mbox{if $d$ is divisible by $3$}; \\
2d & \mbox{otherwise.}   
\end{array}
\right.
\]

\item  $d$ is an odd integer. 
\[p^t = \left \{\begin{array}{llllllllllll}
 3d & \mbox{if $d$ is divisible by $3$}; \\
d & \mbox{otherwise.}   
\end{array}
\right.
\]

\end{case}

\item  Let $p^o$  denote of  the period  of the  sequence of
  octahedral numbers $o_n$ mod $d$.
\[p^o = \left \{\begin{array}{llllllllllll}
 3d & \mbox{if $d$ is divisible by $3$}; \\
d & \mbox{otherwise.}   
\end{array}
\right.
\]

\item If $p^c$  denotes the period of the  sequence of cubes
  mod $d$, then $p^c=d$.

\item Let $p^i$ denote the period of the sequence of icosahedral numbers mod $d$.
\[p^i = \left \{\begin{array}{llllllllllll}
 2d & \mbox{if $d$ is even}; \\
d & \mbox{otherwise.}   
\end{array}
\right.
\]

\item Let $p^d$ denote the sequence of dodecahedral numbers, then
\[p^d = \left \{\begin{array}{llllllllllll}
 2d & \mbox{if $d$ is even}; \\
d & \mbox{otherwise.}   
\end{array}
\right.
\]
\end{enumerate}

\end{prop}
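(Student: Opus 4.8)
The plan is to reduce the computation of each period to a short list of divisibility conditions on the shift $m$, and then to read off the answer prime-by-prime. The starting point is that every sequence in (\ref{formulas}) is a cubic polynomial in $n$ with $y_0=0$, so for a fixed shift $m$ the map $n\mapsto y_{n+m}-y_n$ is a polynomial of degree at most $2$ in $n$. I would first record the elementary fact that a polynomial $Q$ of degree at most $2$ satisfies $Q(n)\equiv 0\pmod d$ for every integer $n$ if and only if $Q(0)\equiv\Delta Q(0)\equiv\Delta^2Q(0)\equiv 0\pmod d$; this follows by expanding $Q$ in the integer-valued basis $\binom{n}{0},\binom{n}{1},\binom{n}{2}$, whose coefficients are exactly $Q(0),\Delta Q(0),\Delta^2Q(0)$. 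Applying this to $Q(n)=y_{n+m}-y_n$ and using $\bigl.\Delta^j_n(y_{n+m}-y_n)\bigr|_{n=0}=\Delta^j y_m-\Delta^j y_0$, the statement ``$m$ is a period mod $d$'' becomes the three congruences $y_m\equiv y_0$, $\Delta y_m\equiv\Delta y_0$, and $\Delta^2 y_m\equiv\Delta^2 y_0\pmod d$. Since the set of valid shifts is closed under addition and subtraction it equals $p\mathbb{Z}$ for the true period $p$, so it suffices to find the least positive $m$ meeting all three.

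Next I would substitute the explicit formulas (\ref{firsteqns}) and (\ref{secondeqns}). Using $y_0=0$, $\Delta y_0=1$, and the linear form of $\Delta^2 y_n$, the three congruences become divisibility statements $d\mid(\text{slope})\cdot m$, $d\mid(\Delta y_m-1)$, and $d\mid y_m$, where the slopes are $1,4,6,15,27$ for $t_n,o_n,c_n,i_n,d_n$, and $\Delta y_m-1$, $y_m$ are the displayed quadratic and cubic polynomials in $m$ with zero constant term. For the tetrahedral numbers, for instance, these read $d\mid m$, $d\mid\tfrac12 m(m+3)$, and $d\mid\tfrac16 m(m+1)(m+2)$. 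I would then factor $d=2^{a}3^{b}u$ with $\gcd(u,6)=1$ and, by the Chinese Remainder Theorem, compute the period as the least common multiple of the periods modulo $2^a$, modulo $3^b$, and modulo each prime power dividing $u$.

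The routine half is the primes $p\ge 5$: there the denominators $2,3,6$ are units modulo $p^k$, so the conditions collapse to $p^k\mid m$ and each such prime contributes its full power to the period. The delicate half, which I expect to be the main obstacle, is the local analysis at $p=2$ and $p=3$, where those denominators are not invertible. Here I would argue directly with the $2$-adic and $3$-adic valuations, determining the least valuation of $m$ for which all three divisibility conditions hold simultaneously and comparing it with the valuation of $d$. This is exactly where the statement's factors arise: when the slope is coprime to $p$ the conditions force $v_p(m)$ to exceed $v_p(d)$ by one, producing factors such as $2$, $3$, or $6$ in front of $d$ (both $p=2$ and $p=3$ contribute for $t_n$, only $p=3$ for $o_n$, and, generically, neither for $c_n$), whereas the factors of $3$ already present in the slopes $15$ and $27$ weaken the $3$-adic condition enough that the icosahedral and dodecahedral periods depend only on the parity of $d$. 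Establishing minimality — that no smaller shift clears all three conditions at once — is the heart of the argument and must be carried out carefully, case by case, matching the even/odd and divisible-by-$3$ splits in the statement.
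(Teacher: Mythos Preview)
Your approach is sound and genuinely different from the paper's. The paper argues by direct computation: for each sequence and each case it simply expands $y_{n+M}-y_n$ with $M$ equal to the value announced in the statement, factors out $d$, and declares the period to be $M$. Your reduction to the three simultaneous divisibility conditions on the shift $m$, namely $d\mid y_m$, $d\mid(\Delta y_m-1)$, and $d\mid(\Delta^2 y_m-\Delta^2 y_0)$, is correct, and the subsequent prime-by-prime analysis via the Chinese Remainder Theorem is the right way to extract the \emph{minimal} such $m$. This is precisely what the paper's argument never does: each of its computations establishes only that the claimed $M$ is \emph{a} period, so at best it proves $p\mid M$, not $p=M$.

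Your method is therefore more rigorous, but be prepared for it not to confirm Part~3 as stated. The three conditions for $c_n$ read $d\mid 6m$, $d\mid 3m(m+1)$, and $d\mid m^3$; at $d=3^k$ with $k\ge 2$ all three are satisfied once $v_3(m)\ge k-1$, so the period of $n^3$ modulo $9$ is $3$, not $9$ (indeed $n^3\bmod 9$ runs $1,8,0,1,8,0,\dots$), and in general $p^c=d/3$ whenever $9\mid d$. The paper's one-line argument for cubes shows only that $d$ is a period, which is true; it is the minimality claim that fails. The remaining four families do check out under your scheme, and there your heuristic---that a slope coprime to $p$ forces $v_p(m)$ to exceed $v_p(d)$ by one---is accurate.
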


\begin{proof}
\begin{enumerate}

\item We derive the periods of the sequence of tetrahedral numbers. 
\begin{case}
\item $d$ is even.

Let $d$ be not divisible by $3$. Since $d$ is even, we write
$d=2k$ for some $k$. For $n \geq 1$,
\begin{eqnarray*}
t_{n+4k}-t_{n} 
=\frac{(n+4k)((n+4k)+1)((n+4k)+2)}{6} - \frac{(n(n+1)(n+2)}{6} \\ \\
=\frac{2k}{3}\left (3n^2+12nk+6n+16k^2+12k+2 \right ).
\end{eqnarray*}
Since $d$  is not divisible by $3$,  and $t_{n+4k}-t_{n}$ is
an integer,  $3$ divides $(3n^2+12nk+6n+16k^2+12k+2)$. Hence
$t_{n+4k}-t_{n}=  2ks$ for  some integer  $s$. Consequently,
$t_{n+2d} \equiv t_{n}$ mod $d$, which implies $p^t =2d$.

On the  other hand, if $d$  is divisible by  $3$, then $d=6k$
for some integer $k \geq 1$. For any $n \geq 1$,
\begin{eqnarray*}
t_{n+36k}-t_n=6k(3n^2+108nk+6n+1296k^2+108k+2).
\end{eqnarray*}
Hence, $t_{n+36k} \equiv t_n$ mod $6k$. Consequently, $p^t=6d$.

\item $d$ is an odd integer.

Let $d$ be  not divisible by $3$. Since $d>1$ is odd, 
$d=2k+1$ for some integer $k \geq 1$. For $n \geq 1$,
\begin{eqnarray*} t_{n+(2k+1)}-t_n=\frac{(2k+1)}{6}\left(3n^2+6nk+9n+4k^2+10k+6\right).
\end{eqnarray*}
Now $d$ is not divisible by  $2$ or $3$. Hence $2k+1$ is not
divisible     by    $6$.    Consequently,     $6$    divides
$(3n^2+6nk+9n+4k^2+10k+6)$ because  $t_{n+(2k+1)}-t_n$ is an
integer.  Therefore  $t_{n+2k+1}-t_{n}=  (2k+1)s$  for  some
integer $s$. Thus, $p^t=d$.

Now consider  the case  when $d$ is  divisible by  $3$. Then
$d=3(2k+1)$, for some $k \geq 0$. For $n \geq 1$, 
\begin{eqnarray*}
t_{n+3d}-t_n = \frac{3(2k+1)}{2} ( 3n^2+54nk + 33n+324k^2+378k+110).
\end{eqnarray*}
Since $t_{n+3d}-t_n$ is  an integer, $2$ divides $(3n^2+54nk
+   33n+324k^2+378k+110)$.  Therefore  $t_{n+9(2k+1)}-t_{n}=
3(2k+1)s$  for some integer  $s$. Consequently,  $p^t=3d$ in
this case.

\end{case}
\item Next, we consider the sequence of octahedral numbers $o_n$.

Let $d$ be not divisible by $3$. For $n \geq 1$, 
\begin{eqnarray*}
o_{n+d}-o_d=\frac{d}{3}(6n^2+6nd+2d^2+1).
\end{eqnarray*}

Since $o_{n+d}-o_d$ is an  integer, and $d$ is not divisible
by   $3$,   we    get   $3$   divides   $(6n^2+6nd+2d^2+1)$.
Consequently, $o_{n+d} \equiv o_d$ mod $d$. Hence $p^o=d$ in
this case.

On the other hand, if $d$ is divisible by $3$, then $d=3k$ for some $k \geq 1$. For $n \geq 1$,
\begin{eqnarray*}
o_{n+3d}-o_n= 3k(6n^2+54nk+162k^2+1).
\end{eqnarray*}
Consequently, $o_{n+3d} \equiv o_d$ mod $d$. Hence $p^o=3d$.

\item $p^c=d$ because for $n \geq 1$,
\begin{eqnarray*}
c_{n+d}-c_n=(n+d)^3-n^3=3n^2d+3nd^2+d^3=d(3n^2+3nd+d^2).
\end{eqnarray*}

\item We now compute $p^i$.
Let $d$ be an odd integer. For $n \geq 1$,
\begin{eqnarray*}
i_{n+d}-i_n=\frac{d}{2}(15n^2+15nd-10n+5d^2-5d+2).
\end{eqnarray*}
Since  $d$ is  odd,  and $i_{n+d}-i_n$  is  an integer,  $2$
divides  $(15n^2+15nd-10n+5d^2-5d+2)$. Consequently, $p^i=d$
in this case.

On the other hand, when $d$ is even, $d=2k$ for some $k \geq 1$. For $n \geq 1$,
\begin{eqnarray*}
i_{n+2d}-i_n=\frac{(n+4k)(5(n+4k)^2-5(n+4k)+2)}{2} -\frac{n(5n^2-5n+2)}{2} \\ \\
= 2k(15n^2+60nk-10n+80k^2-20k+2).
\end{eqnarray*}
Thus $p^i=2d$.

\item Finally, we compute the periods of the sequence of dodecahedral numbers $d_n$.
Let $d$ be an odd integer. For $n \geq 1$,
\begin{eqnarray*}
d_{n+d}-d_n=\frac{(n+d)(9(n+d)^2-9(n+d)+2)}{2}
 -\frac{n(9n^2-9n+2)}{2}
\\ \\= \frac{d}{2}(27n^2+27nd-18n+9d^2-9d+2).
\end{eqnarray*}
Since $d$ is odd and $d_{n+d}-d_n$ is an integer, we get $2$
divides  $(27n^2+27nd-18n+9d^2-9d+2)$. Consequently,  we get
$p^d = d$ in this case.

On the other hand, when $d$ is even, $d=2k$ for some $k \geq 1$. For $n \geq 1$,
\begin{eqnarray*}
d_{n+2d}-d_n=
2k(27n^2+108nk-18n+144k^2-36k+2).
\end{eqnarray*}
Thus, $p^d=2d$.
\end{enumerate}
\end{proof}

\end{document}